\pgfplotsset{width=7cm,compat=1.3}
\newcommand\lsi[1]{\todo[inline,color=pink!60]{#1}} 
\newcommand\rp[1]{\todo[color=yellow]{#1}}
\newcommand\rpi[1]{\todo[inline,color=yellow]{#1}}
\newtheorem{thm}{Theorem}[section]
\newtheorem{prop}[thm]{Proposition}
\newtheorem{cor}[thm]{Corollary}
\newtheorem{assu-nota}[thm]{Assumption--Notation}
\newtheorem*{thm*}{Theorem}
\theoremstyle{definition}
\newtheorem{rem}[thm]{Remark}
\newtheorem{qst}[thm]{Question}
\newcommand{\Q}{\mathbb Q}
\newcommand{\R}{\mathbb R}
\newcommand{\N}{\mathbb N}
\newcommand{\ud}{^{(d)}}
\newcommand{\up}[1]{^{(#1)}}
\DeclareMathOperator{\Aut}{Aut}
\DeclareMathOperator{\Pic}{Pic}
\DeclareMathOperator{\pic}{Pic}
\DeclareMathOperator{\vol}{vol}
\newcommand{\fie}{\varphi}
\newcommand{\wt}{\widetilde}
\numberwithin{equation}{section}
\title{Higher dimensional Clifford-Severi equalities}
\author{Miguel \'Angel Barja}
\author{Rita Pardini}
\author{Lidia Stoppino}
\address{Miguel \'Angel Barja\\Departament de Matem\`atiques\\Universitat Polit\`ecnica de Catalunya\\Avda. Diagonal 647\\08028 Barcelona\\Spain}
\email{miguel.angel.barja@upc.edu}
\address{Rita Pardini\\Dipartimento di Matematica\\Universit\`a di Pisa \\Largo B. Pontecorvo 5\\I-56127  Pisa\\Italy}
\email{rita.pardini@unipi.it}
\address{Lidia Stoppino \\Dipartimento di Scienza e Alta Tecnologia\\Universit\`a dell'Insubria \\ Via Valleggio 11\\22100, Como\\Italy}
\email{lidia.stoppino@uninsubria.it}
\thanks{The first author was supported by MINECO MTM2015-69135-P ``Geometr{\'\i}a y Topolog{\'\i}a de Variedades, \'Algebra y Aplicaciones" and by Generalitat de Catalunya 2017SGR932. The second and third authors are members of G.N.S.A.G.A.--I.N.d.A.M.  This research was partially supported  and by MIUR (Italy) through  PRIN 2015 ``Geometria delle variet\`a algebriche" and PRIN 2012-13 ``Moduli, strutture geometriche e loro applicazioni''.}
\begin{document}
\begin{abstract}
Let $X$ be a  smooth  complex projective variety,  $a\colon X\rightarrow A$ a morphism  to an abelian variety such that $\pic^0(A)$ injects into $\pic^0(X)$  and let
$L$ be a line bundle on $X$; denote by $h^0_a(X,L)$ the minimum  of $h^0(X,L\otimes a^*\alpha)$ for $\alpha\in \Pic^0(A)$.
 The  so-called Clifford-Severi inequalities have been proven in \cite{barja-severi} and \cite{BPS1}; in particular,  for any $L$ there  is  a lower bound  for the volume given by:
 $$\vol(L)\ge n! h^0_a(X,L),$$
 and, if $K_X-L$ is pseudoeffective,
 $$\vol(L)\ge 2n! h^0_a(X,L).$$
In this paper  we characterize varieties and line bundles  for which the above  Clifford-Severi inequalities are equalities.

\par
\medskip
\noindent{\em 2000 Mathematics Subject Classification:} 14C20 (14J29, 14J30, 14J35, 14J40).
\end{abstract}
\maketitle
\setcounter{tocdepth}{1}
\tableofcontents
\section{Introduction and statement of results}

A minimal surface $S$ of general type and of maximal Albanese dimension satisfies the Severi inequality $K_S^2\geq 4\chi(K_S)$ (\cite{rita-severi}).
The history  of this result spans more than 70 years,  from the original (incorrect) proof of Severi \cite{severi} to the complete proof given in \cite{rita-severi} by the second named author.
Then  the first named author observed that,  thanks to  the generic vanishing theorem,   the Severi  inequality can be interpreted as a lower bound for the ratio of the volume of $K_S$ to the general value of $h^0(S,K_S\otimes \alpha)$ for $\alpha \in \Pic^0(S)$, and that it makes sense to look for lower bounds of this type also  for line bundles other than the canonical  bundle.
So in  \cite{barja-severi} he considered triples $(X, a,L)$, where $X$ is a smooth projective variety of dimension $n$, $a\colon X\to A$ is a morphism such that $\dim a(X)=n$  and  $a^*\colon \Pic^0(A)\to\Pic^0(X)$ is injective, and $L$ a nef line bundle on $X$. He defined the continuous rank $h^0_a(X,L)$ as the generic value of $h^0(X, L\otimes a^*\alpha)$ for $\alpha\in\Pic^0(A)$ and gave a set of ``Clifford--Severi inequalities'' in arbitrary dimension, i.e. lower bounds for the {\em slope} $\lambda(L):=\frac{\vol(L)}{h^0_a(X,L)}$.
The name of Clifford was added because in dimension 1 these inequalities are just a continuous version of the classical Clifford's Lemma for line bundles on a curve.

Finally, in \cite{BPS}  we  strengthened  and refined significantly  the  lower bounds for $\lambda(L)$  given by the Clifford--Severi inequalities. At the same time we  substantially streamlined the proofs.
This was made possible by the introduction of two new tools, the continuous rank function and the eventual map, that  are also  essential for proving the results of this paper.

\smallskip

The most relevant  Clifford-Severi inequalities are the following:
\begin{equation}\label{eq: severi1}
  \vol(L)\ge n! h^0_a(X,L),
  \end{equation}
   and, if $K_X-L$ is pseudoeffective,
   \begin{equation} \label{eq: severi2}
   \vol(L)\ge 2n! h^0_a(X,L).
   \end{equation}
Note that the Severi inequality for surfaces is a special case of \eqref{eq: severi2}.

When $h^0_a(X,L)>0$, inequalities \eqref{eq: severi1} and \eqref{eq: severi2} can be rephrased as $\lambda(L)\geq n!$ and $\lambda(L)\geq2n!$, respectively.
We say that a line bundle $L$ belongs to the first (respectively,  second) {\it Clifford-Severi line} iff $h^0_a(X,L)>0$ and $\lambda(L)= n!$ (respectively, $\lambda(L)=2n!$).

Both  inequalities (\ref{eq: severi1})  and  (\ref{eq: severi2}) are sharp: ample line bundles $L$ on abelian varieties $X=A$ clearly belong to the first Clifford Severi line;
in addition, if   $a\colon X\rightarrow A$ is a double cover of an abelian variety branched on  a smooth  ample divisor  $B\in |2R|$, any line bundle $L=a^*N$ with $0\leq N \leq R$  verifies $\lambda(L)=2n!$, i.e. $L$   belongs to the Second Clifford-Severi line.

Still,  the classification of these extremal  cases has proven surprisingly hard, due to the fact that the proof of the inequalities in \cite{rita-severi} and in \cite{barja-severi} is based on a limiting argument. For example, the classification of surfaces on the Severi line $K^2_S=4\chi(K_S)$  was achieved only in 2015 (\cite{BPS} and  \cite{LZ2}, independently), ten years later than \cite{rita-severi}. The higher dimensional cases and  the case of a general $L$ (even for surfaces) were completely open.
\smallskip

In this paper, we classify the triples $(X,L,a)$
achieving equality in  \eqref{eq: severi1} and \eqref{eq: severi2};
the   classification essentially reduces to the known examples described above. Let us state our main results.

Given a morphism  $a\colon X\to A$ of a $n$-dimensional variety $X$ to an abelian variety $A$, we say that $a$ is {\em strongly generating}  if   $a^*\colon \Pic^0(A)\to\Pic^0(X)$ is injective, and {\em of maximal $a$-dimension} if $\dim a(X)=n$.

First we  give a characterization of pairs   $(X,L)$ on the  first Severi line:
\begin{thm}\label{thm: Severi line 1}
Let $X$ be a smooth variety of dimension $n\geq 1$, let $a\colon X\to A$ be a strongly generating map to a $q$-dimensional abelian variety such that $X$ is of maximal $a$-dimension;  let $L\in {\Pic}(X)$ be a line bundle with $h^0_a(X,L)>0$.  \par
If $\lambda(L)=n!$, then  $q=n $ and $a\colon X\to A$ is a birational morphism.
\end{thm}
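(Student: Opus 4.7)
The plan is to use the eventual map of $(X,a,L)$ and the refined Clifford-Severi machinery from \cite{BPS1} to reduce the extremality condition to a rigidity problem on a ``critical'' target, which must then be identified with an abelian variety.

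Since $\vol(L)=n!\,h^0_a(X,L)>0$, the line bundle $L$ is big. After a birational modification of $X$ I may assume the eventual map $\varphi_L\colon X\to Z$ of the triple $(X,a,L)$ is a morphism onto a variety $Z$ of dimension $n$, equipped with a strongly generating morphism $\bar a\colon Z\to A$ of maximal $\bar a$-dimension satisfying $a=\bar a\circ\varphi_L$. These inherited properties follow from the construction of the eventual map in \cite{BPS1}.

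The key input is the refined Clifford-Severi inequality (the form in which the degree of the eventual map appears explicitly, as developed in \cite{BPS1}), which takes the shape
\[
\vol(L)\ \ge\ \deg(\varphi_L)\cdot n!\cdot h^0_a(X,L).
\]
The equality hypothesis then forces $\deg(\varphi_L)=1$, so $\varphi_L$ is birational onto $Z$, and the pushed-down triple $(Z,L_Z,\bar a)$ still satisfies $\lambda_{\bar a}(L_Z)=n!$, now with the additional feature that its \emph{own} eventual map is the identity.

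It remains to identify this critical triple. On $(Z,L_Z,\bar a)$, triviality of the eventual map converts, via Green--Lazarsfeld--Hacon generic vanishing, the continuous rank $h^0_{\bar a}(Z,L_Z)$ into the generic Euler characteristic $\chi(Z,L_Z\otimes\bar a^*\alpha)$. The equality $\vol(L_Z)=n!\,\chi(Z,L_Z)$ on a variety of maximal $\bar a$-dimension then forces $Z$ to be an abelian variety and $\bar a$ an isogeny; the strong-generation of $\bar a$ inherited from $a$ rules out non-trivial isogenies, so $\bar a$ is an isomorphism. Hence $q=n$ and $a=\bar a\circ\varphi_L$ is a birational morphism.

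The main obstacle I expect is this last identification step: proving that any critical triple with trivial eventual map must come from an ample line bundle on an abelian variety. This rigidity hinges on a careful analysis of the continuous rank function combined with the generic vanishing package, and is most naturally organized as an induction on $n$ together with a derivative computation at the extremal parameter.
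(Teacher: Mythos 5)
There is a genuine gap, and it sits exactly where you place your ``main obstacle''. Your first reduction is essentially correct: writing the continuous moving part as $P=\varphi_L^*N$ and using $\vol(L)\ge\vol(P)=m_L\,\vol_Z(N)\ge m_L\, n!\, h^0_{\bar a}(Z,N)=m_L\, n!\, h^0_a(X,L)$ (the last step being the first Clifford--Severi inequality on a resolution of $Z$), equality $\lambda(L)=n!$ does force $m_L=1$. But this step carries almost no information: any very ample $L$ on a general type variety of maximal Albanese dimension also has $m_L=1$, so ``extremal triple whose eventual map is birational'' is nowhere near a characterization of abelian varieties, and the entire burden falls on the identification step that you leave open. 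That step, as written, does not work: generic vanishing converts $h^0$ into $\chi$ only for $\omega_X$ (or more generally for GV-sheaves), not for an arbitrary line bundle $L_Z$, and the birationality of the eventual map has no bearing on the vanishing of the higher cohomology of $L_Z\otimes\bar a^*\alpha$. The subsequent claim that $\vol(L_Z)=n!\,\chi$ ``forces $Z$ to be an abelian variety'' is a restatement of the theorem, not an argument, as you yourself acknowledge.

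The missing idea is the refined inequality of \cite[Theorem 6.7]{BPS1}, namely $\lambda(L)\ge\frac{2r(L,M)}{2r(L,M)-1}\,n!$, where $r(L,M)=\frac{LM^{n-1}}{K_XM^{n-1}}$ is the numerical degree of subcanonicity with respect to $M=a^*H$. The hypothesis $\lambda(L)=n!$ forces $r(L,M)=\infty$, i.e. $K_XM^{n-1}=0$, which (since $M$ is big and $X$ is of maximal $a$-dimension) yields $\kappa(X)=0$; Kawamata's characterization of abelian varieties then shows that $X$ is birational to an abelian variety, and strong generation together with maximal $a$-dimension give $q=n$ and the birationality of $a$. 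Note that this route never uses the eventual map at all. To salvage your plan you would need an independent proof of the rigidity statement for critical triples with trivial eventual map, and that statement is essentially the theorem itself.
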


Next we consider pairs $(X,L)$ on the second Clifford-Severi line. 

If the assumptions of Theorem \ref{thm: Severi line 1} are satisfied, one can define the {\em eventual map} given by $L$ (\cite{BPS1}, cf. \S \ref{ssec: eventual}): this is a generically finite map whose degree we denote by $m_L$.
For $n\ge 2$,  we have  the following characterization:
\begin{thm}\label{thm: Severi line 2} Let $X$ be a smooth variety of dimension $n\geq  2$, let $a\colon X\to A$ be a strongly generating map to a $q$-dimensional abelian variety  such that $X$ is of maximal $a$-dimension;  let $L\in {\Pic}(X)$ be a line bundle with $h^0_a(X,L)>0$.  \par
  If $K_X-L$ is pseudoeffective  and $\lambda(L)=2n!$, then:
\begin{enumerate}
\item $q=n$, $\deg a=2$;
\item if $\chi(K_X)>0$,  then $K_X=a^*N+E$ for some ample $N$ in $A$ and  some (possibly non effective) $a$-exceptional divisor $E$, $m_{K_X}=2$ and $a$ is birationally equivalent to the eventual map of $K_X$;

\item if in addition $K_X-L+\alpha$ is effective for some  $\alpha\in \pic^0(A)$, then $m_L=2$ and $a$ is  birationally equivalent to the eventual map of $L$.
\end{enumerate}
\end{thm}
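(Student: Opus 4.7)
My plan is to deduce Theorem \ref{thm: Severi line 2} by bootstrapping from Theorem \ref{thm: Severi line 1}. The guiding heuristic is the standard example: a double cover of an abelian variety branched on an ample $B \in |2R|$ becomes birational to an abelian variety after ``separating the sheets,'' so second-line extremal triples should be precisely the $2{:}1$ push-forwards of first-line extremal triples, and the characterization should follow as soon as the extremality of $\lambda(L)=2n!$ is converted into the extremality of a first Clifford--Severi inequality on a related triple.

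For (1), the strategy is to show that the extremality $\lambda(L)=2n!$ is equivalent, via the continuous rank function of \cite{BPS}, to the extremality of \eqref{eq: severi1} on an auxiliary triple. More precisely, the proof of \eqref{eq: severi2} proceeds by a limiting argument on the multiplication covers $\mu_d\colon X_d \to X$: one applies \eqref{eq: severi1} to $\mu_d^*L$ twisted by a canonical-like line bundle and lets $d\to\infty$, the factor $2$ arising from a $2{:}1$ symmetry produced by the pseudoeffectivity of $K_X-L$. In the equality case all intermediate inequalities must be equalities, and Theorem \ref{thm: Severi line 1} applied to the auxiliary triple then forces $q=n$ (the eventual target is an abelian variety of dimension $n$) and $\deg a = 2$ (the $2{:}1$ symmetry is realized by the generically finite map $a$).

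For (2), the hypothesis $\chi(K_X)>0$ combined with generic vanishing guarantees $h^0_a(X, K_X) > 0$, so the eventual map $\varphi_{K_X}$ is defined. Using (1) and the pseudoeffectivity of $K_X - L$, one shows that $(X, K_X, a)$ also saturates \eqref{eq: severi2}; factoring $\varphi_{K_X}$ through the Stein factorization of $a$ then forces the eventual target to be birational to $A$ and $m_{K_X} = \deg a = 2$. Viewing $a$ (on a suitable birational model) as a double cover of $A$ branched on an ample $B \in |2R|$, the canonical bundle formula yields $K_X = a^*R + E$ (since $K_A = 0$), with $E$ identified as an $a$-exceptional divisor by comparison of Stein factorizations. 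For (3), the effectiveness of $K_X - L + \alpha$ supplies, for each $\beta \in \pic^0(A)$, an inclusion $H^0(X, L\otimes a^*\beta) \hookrightarrow H^0(X, K_X \otimes a^*(\beta-\alpha))$ by multiplication with a fixed section of $K_X - L + \alpha$; this transports the factorization of $\varphi_{K_X}$ through $a$ to an analogous factorization for $\varphi_L$, whence $m_L = 2$ and $\varphi_L$ is birationally equivalent to $a$.

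The principal obstacle is the first step: executing the limiting argument so that the equality $\lambda(L)=2n!$ genuinely forces equality in a first Clifford--Severi inequality to which Theorem \ref{thm: Severi line 1} applies. Because volume is not additive, no elementary addition of inequalities suffices; the correct tool is the concavity of the continuous rank function along one-parameter families, whose behaviour under the covers $\mu_d$ must be tracked carefully as $d\to\infty$. Equally delicate will be the final identification of $E$ as an $a$-exceptional divisor: a priori $E$ is merely contracted by $\varphi_{K_X}$, and matching this with the Stein factorization of $a$ requires tight control of the branch locus of the double cover.
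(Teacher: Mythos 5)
Your plan for part (i) has a genuine gap at its central step: you assert that the factor $2$ in the second Clifford--Severi inequality ``arises from a $2{:}1$ symmetry produced by the pseudoeffectivity of $K_X-L$,'' and that in the equality case this symmetry ``is realized by'' a degree-$2$ map, to which Theorem \ref{thm: Severi line 1} can then be applied. That is not where the factor $2$ comes from. In the proof of \eqref{eq: severi2} the constant $2n!$ is obtained by restricting to general members $M_d$ of $|a_d^*H|$, applying the $(n-1)$-dimensional (ultimately, the continuous Clifford) inequality with constant $2(n-1)!$, and integrating the resulting bound $\psi'(x)\ge 2n!\,D^-\phi(x)$ over $x\le 0$; no double cover appears anywhere in that argument, so ``all intermediate inequalities are equalities'' only tells you that $\vol(L_x)=2n!\,h^0_a(X,L_x)$ for every $x\le 0$. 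You still have to manufacture a degree-$2$ map out of this, and your proposal contains no mechanism for doing so --- in particular there is no identified auxiliary triple on the first Clifford--Severi line. The paper supplies exactly this missing mechanism with two ingredients absent from your plan: (a) the equality $\vol(L_x)=2n!\,h^0_a(X,L_x)$ forces the eventual degree $m_{L_x}\in\{1,2\}$ for all relevant $x\le 0$ (\cite[Theorem 6.8]{BPS1}); and (b) if $m_{L_x}=1$ for all such $x$, then the strictly stronger inequality $\vol(L)\ge\tfrac52\,n!\,h^0_a(X,L)$ holds (Proposition \ref{prop: nuova}), contradicting $\lambda(L)=2n!$. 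Only then does one obtain some $t_0\le 0$ with $m_{L_{t_0}}=2$, and the degree-$2$ eventual map $\varphi\colon X\to Z$ with $P=\varphi^*N$ is the auxiliary object: the chain $2n!\,h^0_{a'}(Z,N)=\vol(L_{t_0})\ge\vol(P)=2\vol_Z(N)\ge 2n!\,h^0_{a'}(Z,N)$ puts $(Z,N,a')$ on the first line, and Theorem \ref{thm: Severi line 1} gives $Z\to A$ birational, hence $q=n$ and $\deg a=2$. Without (a) and (b) your argument cannot get started.

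Parts (ii) and (iii) of your plan are closer to the mark but also drift from what is needed. For (ii) you propose to show that $(X,K_X,a)$ itself saturates \eqref{eq: severi2}; this is neither justified by what you write nor necessary: once $\deg a=2$ is known, the Stein factorization $X\to\overline X\to A$ exhibits $\overline X$ as a flat double cover of $A$, so $K_{\overline X}=\bar a^*N$ by the double-cover formula and $K_X=a^*N+E$ with $E$ exceptional for $X\to\overline X$ (which is the same as $a$-exceptional since $\bar a$ is finite); the statements $m_{K_X}=2$ and the identification of the eventual map then follow from the projection formula $h^0_{\bar a}(\overline X,K_{\overline X})=h^0_{\Id}(A,N)$ applied to all the covers $X\ud$, not from an equality case. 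For (iii), your inclusion $L\hookrightarrow K_X\otimes\alpha$ only bounds $m_L$ from one side; the paper sandwiches $m_L$ using both this inclusion and the inclusion $L_{t_0}\hookrightarrow L$ coming from $t_0\le 0$, together with $m_{L_{t_0}}=m_{K_X}=2$. In all three parts the missing input is the eventual-degree analysis of part (i).
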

The case of curves is dealt with separately (Theorem \ref{thm: curve}). It is worth observing that the above results are new in any dimension.

Theorem \ref{thm: Severi line 2} gives the following characterization of varieties on the second Clifford-Severi line for $L=K_X$, extending the analogous result for surfaces (\cite{BPS}, \cite{LZ2}).
\begin{cor}\label{cor: Severi Canonico}
Let $X$ be a complex projective minimal $\mathbb{Q}$-Gorenstein variety of dimension $n\ge 2$ with terminal singularities,  let $a\colon X\rightarrow A $  be the Albanese map and let $\omega_X={\mathcal O}_X(K_X)$ be the canonical  sheaf. \par
If $X$ is of maximal Albanese dimension and $K_X^n= 2 n!\chi(\omega_{X})>0$, then:
\begin{enumerate}
\item $q=n$ and $\deg a=2$;
\item if $X'\to X$ is a desingularization and $a'\colon X'\to A$ is the Albanese map, then $K_{X'}={a'}^*N+E$ for some ample $N$ in $A$ and   $a'$-exceptional divisor $E$, $m_{K_{X'}}=2$ and $a'$ is birationally equivalent to the eventual map of $K_{X'}$.
\end{enumerate}
\end{cor}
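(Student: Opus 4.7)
My plan is to reduce the corollary to Theorem \ref{thm: Severi line 2} applied to the triple $(X', a', K_{X'})$, where $\pi\colon X'\to X$ is any desingularization and $a'=a\circ\pi\colon X'\to A$ is the Albanese map of $X'$. I first check that the smooth model satisfies the structural hypotheses of Theorem \ref{thm: Severi line 2}: since $X'$ is smooth, $a'$ is strongly generating; since $X$ has maximal Albanese dimension, so does $X'$; and $K_{X'}-K_{X'}=0$ is trivially pseudoeffective.

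The next step, and the core of the argument, is to translate the numerical hypothesis $K_X^n=2n!\chi(\omega_X)$ into the slope equality $\lambda(K_{X'})=2n!$. Terminal singularities are canonical, hence rational, so $\pi_*\omega_{X'}=\omega_X$ and $R^i\pi_*\omega_{X'}=0$ for $i>0$; the projection formula together with the Leray spectral sequence then yields, for every $\alpha\in\Pic^0(A)$, the identification $h^j(X',K_{X'}\otimes a'^*\alpha)=h^j(X,\omega_X\otimes a^*\alpha)$ for all $j\geq 0$. Generic Vanishing, applied to the smooth variety $X'$ of maximal Albanese dimension, then gives $h^0_{a'}(X',K_{X'})=\chi(\omega_{X'})=\chi(\omega_X)>0$. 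On the volume side, the equality $\pi_*\mathcal{O}_{X'}(mK_{X'})=\mathcal{O}_X(mK_X)$, valid for canonical singularities and all $m\geq 0$, shows that $X$ and $X'$ have the same plurigenera, so $\vol(K_{X'})=\vol(K_X)=K_X^n$, the last equality using the nefness of $K_X$. Combining these gives $\lambda(K_{X'})=K_X^n/\chi(\omega_X)=2n!$.

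I then apply Theorem \ref{thm: Severi line 2} to $(X',a',K_{X'})$. Part (1) of that theorem yields $q=n$ and $\deg a'=2$, whence $\deg a=2$ because $\pi$ is birational. Since $\chi(K_{X'})=\chi(\omega_X)>0$, part (2) applies and produces the decomposition $K_{X'}=a'^*N+E$ with $N$ ample on $A$ and $E$ an $a'$-exceptional divisor, together with $m_{K_{X'}}=2$ and the identification of $a'$ with the eventual map of $K_{X'}$ up to birational equivalence. (Part (3), taken with $\alpha=0$, would give the same eventual-map conclusion and is subsumed.) This is precisely the content of the corollary.

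The main obstacle I expect is the passage from the singular $X$ to its resolution $X'$: I must invoke that terminal singularities are canonical in order to match $h^j(X,\omega_X\otimes a^*\alpha)$ with $h^j(X',K_{X'}\otimes a'^*\alpha)$ via Kempf--Koll\'ar-type vanishing, and use the nefness of $K_X$ to identify the volume with the top self-intersection. Once these standard facts about canonical singularities are in hand, the corollary follows directly from Theorem \ref{thm: Severi line 2} with no additional geometric input.
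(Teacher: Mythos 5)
Your proposal is correct and follows essentially the same route as the paper: pass to a desingularization, use rationality of the (terminal, hence canonical) singularities to identify $\chi(\omega_{X'})$ with $\chi(\omega_X)$ and generic vanishing to get $h^0_{a'}(X',K_{X'})=\chi(\omega_{X'})>0$, identify $\vol(K_{X'})=K_X^n$, and then invoke Theorem \ref{thm: Severi line 2}. The only difference is that you spell out the intermediate steps (Leray/projection formula, preservation of plurigenera, nefness of $K_X$) that the paper leaves implicit.
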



The key point in our proofs are the new  techniques introduced in \cite{BPS1},  i.e. the construction of  the eventual map 
associated to a line bundle such that $h^0_a(X,L)>0$ (see \ref{ssec: eventual}), and the {\em continuous rank function} (see \ref{ssec: continuous rank}):
given any ample line bundle $H $ on $A$ there is a natural way to  define a real valued function  $\phi(x)=h^0_a(X, L+xa^*H)$ of  $x\in \R$, which  is convex and continuous.
With this new approach the limiting argument of \cite{rita-severi} and \cite{barja-severi} is embodied  in the definition of the rank function.
Moreover, the Clifford-Severi inequalities imply a relation between the rank function and the volume function $\vol_X(L+xa^*H)$ (\cite{LM}), i.e. the non-negativity of the functions $\vol_X(L+xa^*H)-n!h^0_a(X, L+xa^*H) $ and $\vol_X(L+xa^*H)-2n!h^0_a(X, L+xa^*H) $, respectively. We  approach the classification problem by studying the minima of these two functions.
\smallskip

When $L=K_X$,  we have  proven  (Theorem \ref{thm: Severi line 2} (ii))  that $K_X$ is the pullback of an ample divisor on $A$ plus an $a$-exceptional divisor. In the case of surfaces we can  say  more (cf. \cite{BPS}):  the $a$-exceptional divisor is effective. This is equivalent to the fact that, given the Stein factorization of $a$: $X\to \overline X\to A$, the surface $\overline X$ has canonical singularities.
\begin{qst}\label{conj: Severi1}
In the hypoteses of Theorem \ref{thm: Severi line 1}, or of   \ref{thm: Severi line 2}, is it true that $L=a^*N+E$ for some ample $N$ in $A$ and  some {\it effective} $a$-exceptional divisor $E$?
\end{qst}


\section{Preliminary results}\label{sec: Preliminaries}
\subsection{Notation and conventions}
We work over the complex numbers; varieties and subvarieties are assumed to be irreducible and projective.
Since our  focus is on birational geometry,  a   {\em map}  is a   rational map and we denote all maps by solid arrows.
 We say that two dominant maps $f\colon X\to Z$, $f'\colon X\to Z'$ are {\em  birationally equivalent} if there exists a birational isomorphism $h\colon Z\to Z'$ such that $f'=h\circ f$.

 We do not distinguish between divisors and line bundles and use both the additive and multiplicative notation, depending on the situation.

 If $d$ is a non-negative integer, we write ``$d\gg 0$'' instead of ``$d$ large and divisible enough''.
\bigskip

We recall briefly some definitions and results from  \cite{BPS1},  Sections 3 and 4. We refer the reader to \cite{BPS1}  for more details.
\subsection{The continuous rank}\label{ssec: continuous rank}
The starting point is a smooth variety $X$ of dimension $n$ with a map $a\colon X \rightarrow A$ to an abelian variety of dimension $q$. We  say that $a$ is {\em strongly generating} if $a^*\colon \Pic^0(A)\to\Pic^0(X)$  is injective and we  say that $X$ is {\em of maximal $a$-dimension} if $\dim  a(X)=n$.
We assume throughout all the section  that  both these conditions hold for $a\colon X\to A$; to simplify the notation,  given  $\alpha\in \Pic^0(A)$    we write   $\alpha$ instead of  $a^*\alpha$.
Given a line bundle $L\in{\rm Pic}(X)$, we define its {\it continuous rank} (with respect to $a$)  as the minimum $h^0_a(X,L)$  of $h^0(X,L\otimes  \alpha)$ for $\alpha \in \Pic^0(A)$.

Consider  the  cartesian  diagram:
\begin{equation}\label{diag:-1}
\begin{CD}
X\up{d}@>\wt{\mu_d}>>X\\
@V{a_d}VV @VV a V\\
A@>{\mu_d}>>A
\end{CD}
\end{equation}
where $\mu_d$ denotes multiplication by $d$. The map $\wt \mu_d$ is \'etale of degree $d^{2q}$ and $X\ud$ is connected  since  $a$ is strongly generating; we set $L\ud:=\wt \mu_d^* L$.

One of the reasons for  introducing the continuous rank is the following {\em multiplicative property}:
\begin{equation}\label{eq: multiplication}
h^0_{a_d}(X\ud, L\ud)=d^{2q}h^0_a(X,L)\,\,\, \mbox{ for any }d\in \N.
\end{equation}
Now fix a very ample line bundle $H\in \Pic^0(A)$ and set $M:=a^*H$, and, more generally, $M_d:=a_d^*H$ for all $d\in \N$. The line bundles $M\ud$ and $M_d$ on $X\ud$ are related by the formula:
\begin{equation}\label{eq: division}
M\ud =d^{2q}M_d\quad \mod \Pic^0(A).
\end{equation}

For $x\in \R$ consider the $\R$-line bundle $L_x:=L+xM$; if $x$ is rational, then by \eqref{eq: division} we have that  $(L_x)\ud=L\ud +xM\ud=L\ud+xd^2M_d$ is an integral line bundle if $d\gg 0$ and $h^0_a(X,L_x):=\frac{1}{d^{2q}}h^0_{a_d}(X\ud, L\ud+xd^2M_d)$ is well defined thanks to \eqref{eq: multiplication}.

The function $x\mapsto h^0_a(X,L_x)$ just defined for $x\in \Q$ can be extended to a continuous, convex function $\phi(x)$ on $\mathbb{R}$.  In particular, $\phi$   has one-sided derivatives at every point $x\in \mathbb{R}$ and is differentiable except at most at countably many points.

Given a subvariety $T\subseteq X$, we  define   the {\em restricted continuous rank} $h^0_a(X_{|T}, L)$ as the generic value of $\dim {\rm Im}(H^0(X,L\otimes \alpha)\rightarrow H^0(T,(L\otimes \alpha)_{|T}))$, for $\alpha \in \Pic^0(A)$.  The restricted continuous rank also  satisfies the multiplicative property
\eqref{eq: multiplication}, so  for rational values of $x$ one can define  $h^0(X_{|T},L_x)$ as above,  and also in this case the function $h^0(X_{|T},L_x)$ extends to a continuous function of $x\in \R$.
Using this definition we are able  to give an explicit formula for the left derivative of $\phi$:
\begin{equation}\label{eq: derivata phi}
D^-\phi(x)=\lim_{d\rightarrow \infty}\frac{1}{d^{2q-2}}h^0_{a_d}(X\up{d}_{|M_d},(L_x)^{(d)}), \quad \forall x\in\R.
\end{equation}
\subsection{Slope and degree of subcanonicity}\label{ssec: subcanonicity}
Assume that $h^0_a(X, L)>0$ and define the slope of $L$ as:
$$\lambda(L):=\frac{\vol(L)}{h^0_a(X,L)}.$$
Note that  by  \cite[Proposition 3.2]{BPS1} the line bundle $L$ is big, and so  $\lambda(L)>0$.

The Clifford-Severi inequalities give explicit lower bounds for $\lambda(L)$, which involve the dimension of $X$ and the so called {\em numerical degree of subcanonicity} of $L$ (with respect to $M$). This is defined as:
\begin{equation}
r(L,M):=\frac{LM^{n-1}}{K_X M^{n-1}}\in (0,\infty].
\end{equation}
If  $r(L,M)=\infty$, namely if $K_X M^{n-1}=0$ then $\kappa(X)\le 0$, because $M$ is  big.  On the other hand, we have $\kappa(X)\ge 0$ since $X$  is of maximal $a$-dimension, so we conclude that $r(L,M)=\infty$ implies $\kappa(X)=0$.

\subsection{The eventual map and the eventual degree} \label{ssec: eventual}
Assume that $h^0_a(X,L)>0$. One of the main results of \cite{BPS1} (Theorem 3.7) is the definition of the {\em eventual map} associated with $L$: it is a generically finite dominant map $\varphi\colon X\to Z$, characterized up to birational equivalence by the following properties:
\begin{itemize}
\item[(a)] $a\colon X\to A$ factorizes as $X\overset{\fie}{\to}Z\overset{g}{\to} A$ for some map $g$.
\item[(b)] By property (a), we have a cartesian diagram:

 \begin{tikzcd}[ampersand replacement=\&]
X\up{d} \arrow[rightarrow]{r}{\fie\up{d}} \arrow{d}{\widetilde{\mu_d}} \& Z\up{d} \arrow{d}\arrow[rightarrow]{r} \& A\arrow{d}{\mu_d}\\
X \arrow[rightarrow]{r}{\fie} \& Z \arrow[rightarrow]{r}{g} \& A
\end{tikzcd}

Then the map $\fie\ud$ is birationally equivalent to the map given by $|L\ud\otimes \alpha|$ for $d\gg 0$ and $\alpha\in \Pic^0(A)$ general.
\end{itemize}
The degree of $\fie$ is denoted by $m_L$ and is called the {\em eventual degree} of $L$.
It is immediate to see that $L$ and $L\ud$ have the same eventual degree, so for $x\in \Q$ we choose $d$ such that ${L_x}\ud$ is integral and we set $m_{L_x}:=m_{(L_x)\ud}$. The function $x\mapsto m_{L_x}$  is non increasing and takes  integer values, so it can be extended to a left continuous function of $x\in \R$.
\smallskip


\subsection{Continuous resolution of the  base locus}\label{ssec: continuous-res}
Let  $\sigma\colon \wt X\to X$ be a birational morphism, set  $\wt L:=\sigma^*L$ and denote by $\wt a\colon \wt X\to A$ the map induced by $a$; we have $h^0_{\wt a}(\wt X, \wt L)=h^0_a(X,L)$, $\vol(\wt L)=\vol(L)$ and   the eventual map  $\wt \fie$ given by $\wt L$ is equal to $\fie\circ \sigma$ (in particular, $m_{\wt L}=m_L$).  So, for many purposes, we may replace $(X,L)$ by $(\wt X,\wt L)$.

Since it is  easier to deal with morphisms than with rational maps, the following construction ({\em continuous resolution of the  base locus}) is  often useful.
 It is possible to choose the morphism $\sigma\colon \wt X \to X$ in such a way that  there is a decomposition $\wt L=P+D$  as a sum of effective divisors,  such that for $d\gg 0$:
  \begin{itemize}
  \item[(a)]  the system  $|P\up{d}\otimes \alpha|$ is free for all $\alpha \in \pic^0(A)$;
  \item[(b)] $|P\up{d}\otimes \alpha|$ is  the moving part of  $|\wt L\ud\otimes \alpha|$  for $\alpha \in \Pic^0(A)$ general.
  \end{itemize}

 Clearly, one has $h^0_{\wt a}(\wt X, P)=h^0_{\wt a}(\wt X, \wt L)=h^0_{a_d} (X\ud,L\ud)=d^{2q}h^0_a(X,L)$, where $\wt a\colon \wt X\to A$ is the morphism  induced by $a$. Note that $P$ and $D$ are divisors on $\wt X$ and that the modification $\sigma \colon \wt X\to X$ satifying the above properties is not unique; however we will usually assume that a suitable $\sigma$ has been chosen and we will call $P$  and $D$ the {\em continuous moving part}  and the {\em continuous fixed part} of $L$, respectively. Actually, given  $x\in \Q$  such that $h^0_a(X,L_x)>0$ it is convenient to be able to speak about the continuous moving part $P_x$ of $L_x$ and its continuous fixed part $D_x$.  In order to do this we choose $d\in \N$ such that $(L_x)\ud$ is integral and then choose a modification $\eta\colon Y\to X\ud$ on which  $\eta^*\left((L_x)\ud\right)$ decomposes as a sum  $P+ D$ of its continuous moving and fixed part. One would like to have  divisors $P_x$ and $D_x$ on (a modification of)  $X$ that pull back to $P$ and $D$ on $Y$ and call these the continuous moving and fixed part of $L_x$, but such $P_x$ and $D_x$ usually do not exist. Moreover   the construction of $P$ and $D$ involves   several choices; however,   some of the  invariants attached to the  decomposition do not depend on these choices and so it makes sense to define them, in particular we can define:
 \begin{itemize}
 \item $h^0_a(X,P_x):=\frac{1}{d^{2q}}h^0_{\alpha }((Y, P)=\frac{1}{d^{2q}}h^0_a(X,(L_x)\ud)$, where $\alpha\colon Y\to A$ is the map induced by $a_d$;
 \item $(P_x)^{n-1}M:= \frac{1}{d^{2q}}P^{n-1}(\eta^*M)$;
 \item $m_{P_x}=m_P=m_L$.
 \end{itemize}
Moreover, we will sometimes write $(P_x)\ud$,  implying that we have chosen $d$ and $\eta\colon Y\to X\ud$ as above and we are considering the continuous moving part $P$ of $\eta^*\left((L_x)\ud\right)$.

\subsection{The volume function}\label{ssec: volume}

We refer the reader to \cite{lazarsfeld-I}, \cite[\S~2.2.C]{lazarsfeld-II} and \cite{elmnp} for  a complete  account of the properties of the volume and of the restricted volume. Here we just recall what we need in our  proofs.
\smallskip

The  {\em volume}  of a line bundle $L\in\Pic(X)$ is defined as:
$$\vol(L)=\vol_X(L):=\limsup_m\frac{n!h^0(X,mL)}{m^n}$$
and, more generally, given a subvariety  $T\subset X$ the {\em restricted volume} is defined as:
$$\vol_{X|T}(L):=\limsup_m\frac{n!h^0(X_{|T},mL)}{m^n}.$$

For $t\in \N$ one has  $\vol(tL)=t^n\vol(L)$ and $\vol_{X|T}(tL)=t^k\vol_{X|T}(L)$, where $k=\dim T$,  so both definitions generalize naturally  to $\Q$-line bundles.
Below  we state  in our setting some useful  consequences  of the properties of the volume of a $\Q$-line bundle (cf. also \cite{BPS1}). We assume $h^0_a(X,L)>0$ and we denote by $M$  a general element of $|M|=|a^*H|$ and by $M_d$ a general element of  $|M_d|=|a_d^*H|$.
\begin{itemize}
\item[(i)] if $L=P+D$, with  $D$ effective, then $\vol(L)\ge \vol(P)$ and   $\vol_{X|M}(L)\ge \vol_{X|M}(P)$;
\item[(ii)] if $\eta\colon \wt X\to X$ is a birational morphism, then $\vol_{\wt X}(\eta^*L)=\vol_X(L)$ and $\vol_{\wt X|\wt M}(L)=\vol_{X|M}(L)$, where $\wt M:=\eta^*M$;
\item[(iii)] $\vol_{X\ud}(L\ud)=d^{2q}\vol_X(L)$ and $\vol_{X\ud|M\ud}(L\ud)=d^{2q}\vol_{X|M}(L)$  for $M\in |M|$ general;
\item[(iv)] if $L$ is nef then $\vol_X(L)=L^n$ and $\vol_{X|M}(L)=L^{n-1}M=\vol_M(L_{|T})$.
\end{itemize}
\smallskip

 The  definition  of  volume  can  be extended to $\R$-line bundles, giving  a continuous function  on $N^1(X,\R)$.
 We  consider  the continuous function $\psi(x): =\vol(L_x)$, for $x\in \R$:  if $\vol(L_x)>0$ (i.e., if $L_x$ is big), $\psi$ is  differentiable  and we have (cf Thm.~A and  Cor.~C of   \cite{BFJ},  and  also \cite[Cor. C]{LM}):
  \begin{equation}\label{eq: derivata psi}
\psi'(x)=n\vol_{X|M}(L_x),
\end{equation}
where  $\vol_{X|M}$ is the restricted volume.

We close  this section  with a remark that  allows us to compare  the derivatives of $\psi(x)$ and of the continuous rank function $\phi(x)$.
This will be a key point of our arguments.

\begin{rem}\label{rem: Preliminaries}
Fix $x\in \Q$  such that $h^0_a(X, L_x)>0$ and   consider the continuous moving and fixed part   $P_x$ and $D_x$ of $L_x$ as in Section \ref{ssec: continuous-res}.

Thanks to the above properties (ii) and (iii)  of the volume, 
we can consider:
  $$\vol_{X|M}(P_x):=\frac{1}{d^{2q}}\vol_{X\ud|M\ud}\left((P_x)\ud\right),$$
  where $d\gg 0$.
 By property (iv) we have: 
\begin{gather}\label{eq: eq1}
\vol_{X|M}(P_x)=\frac{1}{d^{2q}}\left((P_x)\ud\right)^{n-1}M\ud=\frac{1}{d^{2q-2}}\left((P_x)\ud\right)^{n-1}M_d=\\
=\frac{1}{d^{2q-2}}\vol_{X\ud|M_d}\left((P_x)\ud\right).\nonumber
\end{gather}

Assume now that   the  inequality $\vol_{X\ud|M_d}\left((P_x)\ud\right)\ge C h_{a_d}^0({X\ud}_{|M_d}, (P_x)\ud))$
holds for some constant $C>0$ and all $d\gg 0$.
By \eqref{eq: eq1}  we obtain
$$\vol_{X|M}(L_x)\ge \vol_{X|M}(P_x)\ge C\frac{1}{d^{2q-2}}h^0_{a_d}({X^{(d)}}_{|M_d},(P_x)^{(d)})=\frac{1}{d^{2q-2}}Ch^0_{a_d}({X^{(d)}}_{|M_d}, (L_x)^{(d)}), $$
where the first inequality follows by property (i) and the final equality comes from the very definition of $P_x$ (see \S \ref{ssec: continuous-res}).
Combining \eqref{eq: derivata phi}, \eqref{eq: derivata psi} and \eqref{eq: eq1}, we obtain:
$$\psi'(x)\ge nC \phi'(x).$$
\end{rem}

\section{Proofs of the main results}\label{sec: Severi lines}

This section is devoted to proving our main results;  we use freely the notation introduced  in Section \ref{sec: Preliminaries}.
\medskip

\subsection{The first Clifford-Severi line}
\begin{proof}[Proof of Theorem \ref{thm: Severi line 1}]
Take $H$ very ample on $A$ and let $M=a^*H$. By Theorem 6.7 in \cite{BPS1} we have:
\begin{equation}\label{eq: thm6.7}
 \lambda(L)\geq \frac{2r(L,M)}{2r(L,M)-1}\,n!
\end{equation}
 where $r(L,M)=\frac{LM^{n-1}}{K_XM^{n-1}}$ is the subcanonicity index of $L$ with respect to $M$ (\ref{ssec: subcanonicity}). Since by hypotesis $\lambda (L)=n!$, we have that $r(L,M)=+\infty$ and therefore $\kappa(X)=0$ (cf. \S ~\ref{ssec: subcanonicity}).

Now, $X$ is of maximal Albanese dimension and hence, by a criterion  of Kawamata (\cite{kawamata}) we have that $X$ is birational to an abelian variety. Since $n=\dim X \leq q=\dim A$ and the map $a$ is strongly generating, we have that $q=n$ and $a$ is birational.
\end{proof}

\begin{rem}The following analogue of \eqref{eq: thm6.7} has been proved in  \cite{barja-severi} for $L$ nef:
\begin{equation}\label{eq: barja-severi}
\lambda (L)\geq \frac{2r(L)}{2r(L)-1}\, n!
\end{equation}
where $r(L)$ is the infimum of the  $r$ such that $rK_X-L$ is pseudoeffective.
Of course $r(L,M)\le  r(L)$  for the    pullback $M$ of any  very ample $H$ on $A$, so   \eqref{eq: thm6.7} is a stronger inequality than \eqref{eq: barja-severi} above.
This is a significant improvement:  when  $\lambda(L)=n!$   inequality \eqref{eq: barja-severi} tells us  only  that $K_X$ is not big,  while inequality \eqref{eq: thm6.7} gives the much stronger condition $\kappa(X)=0$, which is crucial  for   proving  Theorem \ref{thm: Severi line 1} and, as a consequence,  Theorem \ref{thm: Severi line 2}.
\end{rem}

\subsection{The second Clifford-Severi line}

As in Section \ref{sec: Preliminaries}, we fix a very ample line bundle $H$ on $A$,  let $M=a^*H$ and $L_x=L+xM$, for $x\in\R$; recall that the eventual degree $m_{L_x}$ is defined for all $x$ such that $h^0_a(X,L_x)>0$.

We need  the following result, whose proof is a slight modification of the proof of Theorem 6.9 (i) in \cite{BPS1}.
Since that proof is spread in several intermediate results, for the ease of the reader we sketch here the main steps and refer to \cite{BPS1} for further details.


\begin{prop}\label{prop: nuova} In the assumptions of  Theorem \ref{thm: Severi line 2},   assume that we have that $m_{L_x}=1$ for all  $0\ge x\in\mathbb{Q}$ such that $h^0_a(X,L_x)>0$. Then
\begin{equation}\label{eq: terza}
\vol(L)\geq \frac{5}{2}\,n!\,h^0_a(X,L).
\end{equation}
\end{prop}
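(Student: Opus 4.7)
The plan is to compare the continuous rank function $\phi(x):=h^0_a(X,L_x)$ with the volume function $\psi(x):=\vol(L_x)$, where $L_x=L+xM$ and $M=a^*H$, and to integrate a derivative inequality of the form $\psi'(x)\ge \tfrac52 n!\,\phi'(x)$. Set $x_0:=\inf\{x\in\R:\phi(x)>0\}$. By \cite[Proposition~3.2]{BPS1}, $\phi(x)>0$ if and only if $L_x$ is big, so $x_0$ is also the vanishing threshold of $\psi$, and both $\phi(x_0)=0$ and $\psi(x_0)=0$ by continuity. Since $h^0_a(X,L)>0$, we have $x_0\le 0$. Hence, once the derivative inequality is established, integration will give
\[
\vol(L)=\psi(0)-\psi(x_0)=\int_{x_0}^{0}\psi'(x)\,dx \;\ge\; \tfrac52 n!\int_{x_0}^{0}\phi'(x)\,dx=\tfrac52 n!\,h^0_a(X,L).
\]

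To produce the derivative inequality I would follow the structure of the proof of Theorem~6.9(i) in \cite{BPS1}, using Remark~\ref{rem: Preliminaries} as the bridge between the rank and volume sides. Concretely, fix a rational $x\in(x_0,0]$ and choose $d\gg 0$ together with a modification $\eta\colon Y\to X\ud$ realizing the decomposition $(L_x)\ud=P+D$ into continuous moving and fixed parts. By hypothesis $m_{L_x}=1$, and since the eventual degree is preserved under the \'etale cover $\widetilde\mu_d$ and under the choice of moving part, the linear system $|P\otimes\alpha|$ is birational onto its image for $\alpha\in\Pic^0(A)$ general. Combining this birationality with the pseudoeffectivity of $K_X-L$ (hence of $K_X-L_x$ for $x\le 0$, up to an $a$-pullback that does not affect the restricted quantities), I would prove the sharpened restricted Clifford--Severi inequality
\[
\vol_{X\ud|M_d}\!\bigl(P_x\ud\bigr)\;\ge\;\tfrac52 (n-1)!\;h^0_{a_d}\!\bigl(X\ud_{|M_d},P_x\ud\bigr)\qquad(d\gg0).
\]
Remark~\ref{rem: Preliminaries} with $C=\tfrac52(n-1)!$ then yields $\psi'(x)\ge \tfrac52 n!\,\phi'(x)$ at all rational $x\in(x_0,0)$, which extends to almost every real $x\in(x_0,0)$ by continuity/convexity of $\phi$ and by the formula \eqref{eq: derivata psi} for $\psi'$.

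The heart of the argument, and the main obstacle, is the restricted inequality with the improved coefficient $\tfrac52(n-1)!$ in place of the $2(n-1)!$ coming from the subcanonical Clifford--Severi step used in the proof of Theorem~6.9(i). The point is that the usual induction bounds $\vol_{X\ud|M_d}(P\ud)$ from below by $2(n-1)!$ times a continuous rank on a general hyperplane section $M_d$ using only the pseudoeffectivity of $K_X-L$. The extra input $m_{L_x}=1$ forces the eventual map restricted to a general $M_d$ to itself be birational onto its image, which produces an additional $\tfrac12(n-1)!$ in the Clifford-type estimate, of a Castelnuovo flavour. The delicate bookkeeping is to check that this gain persists after the continuous-resolution of the base locus, that it is uniform in $d$, and that it closes the induction on $n$ (with the base case $n=1$ amounting to a continuous Clifford--Castelnuovo estimate on a curve whose canonical--type map is birational). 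Once this restricted inequality is in place, the rest of the argument is the bookkeeping sketched above.
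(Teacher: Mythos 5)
Your proposal follows essentially the same route as the paper's proof: induction on $n$, restriction of the continuous moving part to a general $M_d$ (using that $m_{L_x}=1$ survives multiplication maps, passage to the moving part, and restriction to $M_d$), the bridge of Remark \ref{rem: Preliminaries} to obtain $\psi'(x)\ge \tfrac52\, n!\,\phi'(x)$, and integration --- the paper simply takes $n=2$ as the base case by quoting $\vol(L)\ge 5\,h^0_a(X,L)$ from \cite{BPS1}, which itself rests on the continuous Castelnuovo-type curve estimate you invoke. The one slip is your claim that $h^0_a(X,L_x)>0$ if and only if $L_x$ is big: only the forward implication holds, so the rank and volume thresholds may differ, but this is harmless since your chain only needs $\vol(L)\ge \psi(0)-\psi(x_0)$ rather than equality.
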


\begin{proof}
First of all observe that the hypothesis about the eventual degree is stable under multiplication maps, under passing to the  continuous moving part and restricting to a general section $M_d$ (see \S \ref{sec: Preliminaries}).

 More concretely, if $x\in \mathbb{Q}$, $d\gg 0$ and $M_d$ is a smooth general  member of $|a_d^*(H)|$ then we have
\begin{equation}\label{eq: gradoeventuale1}
m_{L_x}=1 \,\, \Rightarrow \,m_{P_x}=1 \,\,\Rightarrow \, m_{P_x^{(d)}}=1\,\,\Rightarrow \, m_{{{P_x}^{(d)}_{|M_d}}}=1.
\end{equation}
We can assume $h^0_a(X,L)> 0$, since  the result is trivially true otherwise. The proof works by induction on $n$.

For $n=2$ we only need to follow the chain of implications given in \cite{BPS1}, where Proposition 5.4 (ii) implies Theorem 5.5 (ii) which finally implies Proposition 6.14 (i): ${\rm vol}(L)\geq 5 h^0_a(X,L)$.
The assumption made in \cite{BPS1} is that $a $ is of degree 1. We do not have this hypothesis here; however, the only fact used is that the linear systems $|(P_x)\ud|_{|M_d}$ (and hence  $|(P_x)\ud_{|M_d}|$) are generically injective for $d\gg0$, and this is true by (\ref{eq: gradoeventuale1}).


For the inductive step,  take $0>x\in \Q$ with $h^0_a(X,L_x)>0$ and consider its continuous moving part  $P_x$ (cf.  \S~\ref{ssec: continuous-res}).  Note that $K_X-L_x=(K_X-L)-xM$ is pseudoeffective, since  $x<0$ and $K_X-L$ is pseudoeffective. If $t$ is such that $(L_x)\up{t}$ is integral and we have chosen a continuous resolution $\eta\colon Y\to X\up{t}$ of the continuous base locus of $(L_x)\up{t}$, then it is easy to see that $K_{Y\up{t}}-(P_x)\up{t}$ also is pseudoeffective.

So, since $m_{P_x}=1$,   for $d\gg 0$  the inductive hypothesis gives:
\begin{gather}\label{eq: inductive-step}
 \vol_{X\ud|M_d}\left({(P_x)\ud}\right)=({P_x}\ud)^{n-1}M_d=\vol_{M_d}\left({(P_x)\ud}_{|M_d}\right)\geq \\ \ge \frac{5}{2}(n-1)!h^0_{a_d}(M_d,{(P_x)\ud}_{|M_d})
\ge \frac{5}{2}(n-1)!h^0_{a_d}(X^{(d)}_{|M_d},(P_x)^{(d)}), \nonumber
\end{gather}
where the first equalities  hold since $(P_x)\ud$ is nef (cf. \S~\ref{ssec: volume}).
As in  \S \ref{ssec: continuous rank}, denote by $\phi(x)=h^0_a(X,L_x) $ the continuous rank function and by $\psi(x)=\vol(L_x)$ the volume function; by  Remark \ref{rem: Preliminaries},  inequality \eqref{eq: inductive-step} gives:
$$
\psi'(x)\geq \frac{5}{2}n!D^-\phi(x)
$$
for $0\ge x\in \Q$.
Since $\psi'(x)$ is continuous and $D^-\phi(x)$ is non decreasing, the same inequality holds for $x\in\mathbb{R}$.
Now the desired inequality is obtained by taking the integral:
$$
{\vol}(L)=\int_{-\infty}^0\psi'(x)dx\geq \frac{5}{2}n!\int_{-\infty}^0D^-\phi(x)dx=\frac{5}{2}n!h^0_a(X,L).
$$
\end{proof}
\begin{rem}
The inequality (\ref{eq: terza}) of Proposition \ref{prop: nuova} is one of the new stronger Clifford--Severi inequalities proven in \cite{BPS1}. We do not know whether this inequality is sharp (i.e. whether  there are triples $(X,a,L)$, with $a$ of degree 1 such that $\lambda(L)=(5/2)n!$).
In any case, the fact that the condition $m_{L_x}=1$ for all  $0\ge x\in\mathbb{Q}$ implies  an inequality  sharper than (\ref{eq: severi2}) is a key step for proving the characterization of the triples on the second Clifford--Severi line.
\end{rem}

We are now  able to complete the proof of Theorem \ref{thm: Severi line 2}:

\begin{proof}[Proof of Theorem \ref{thm: Severi line 2}]

\noindent (i) Let us define the following real numbers:
$$x_0:=\max\{x\ |\ \vol_X(L_x)=0\}\,\,\mbox{ and }\,\,\bar x:=\max\{x\ |\ h^0_a(X,L_x)=0\}.$$
Since $h^0_a(X,L_x)>0$ implies that $L_x$ is big, we have that $\bar x\ge  x_0$.
Consider the function
 $$\nu(x):={\vol}_X(L_x)-2n!h^0_a(X,L_x).$$
 We are going to prove that $\nu(x)$ is identically zero for $x\leq 0$.
We have   $\nu(0)=0$ by assumption and  $\nu(x) \ge 0$ for $x\le 0$  by \cite[Theorem 6.9]{BPS1}.
 Hence, it suffices to show that the left derivative $D^-\nu(x)$ is  $\ge 0$  for $x< 0$.

 Using the formulae (\ref{eq: derivata phi}) and (\ref{eq: derivata psi}) for the left  derivatives  given in Section \ref{sec: Preliminaries}, we have
 \[
 D^-\nu(x)= \begin{cases}0 & x<x_0\\

 n\vol_{X|M}(L_x)-2n!\lim_{d\to \infty}\frac{1}{d^{2q-2}}h^0_{a_d}(X\ud_{|M_d}, (L_x)\ud) &x>x_0
 \end{cases}
 \]
 Observe that for $0\ge x\in\Q$   the  inequality:
  $$\vol_{X\ud|M_d}\left((P_x)\ud\right)\ge 2(n-1)! h^0_{a_d}(X\ud_{|M_d}, (P_x)\ud))$$
   holds for $d\gg 0$ by  \cite[Theorem 6.7]{BPS1}, since we can  show that $K_{X\ud}-(P_x)\ud$ is pseudoeffective arguing as in the proof of Proposition \ref{prop: nuova}.
So for rational $0\ge x>x_0$ Remark \ref{rem: Preliminaries} gives:
$$D^-\nu(x)=n(\psi'(x)-2(n-1)!\phi'(x))\ge 0.$$
Since $\psi'$ is continuous for $x\ne x_0$ and $\phi'$ is non decreasing, the above inequality actually holds for all $ x\le 0$.
We have thus proven that for all $x\le 0$ one has:
$$
 \vol_X(L_x)=2\,n!\, h^0_a(X,L_x).
 $$

Now we can apply Theorem 6.8 of \cite{BPS1} and deduce that for all $x\in \Q\cap  (\bar x,0]$  we have $m_{L_x}=1$ or $2$. Indeed, if $L_x$ is integral, this follows directly. Otherwise take $t$ such that $L_x^{(t)}$ is integral. Since the volume and the continuous rank are multiplicative, the same inequality holds for $L_x^{(t)}$ and so we have that $m_{L_x}=m_{L_x^{(t)}}=1$ or  2.

Assume that for all rational $x\leq 0$ with $h^0_a(X,L_x)>0$, we have that $m_{L_x}=1$. Then by Proposition \ref{prop: nuova} we would have that $\vol_X (L)\geq \frac{5}{2}\, n!\,h^0_a(X,L)$, a contradiction. So there exists a rational $t_0=e_0/d_0^2\leq 0$ such that $m_{L_{t_0}}=2$.

As in \S~\ref{ssec: continuous-res},  up to passing to $X\ud$ for $d\gg0$,  taking a suitable blow-up of $X^{(d)}$ and tensoring by $\alpha$ general, we can assume  that $L_{t_0}$ is integral and  $|L_{t_0}|=|P|+D$, where $D$ is effective, $|P|$ is base point free and $h^0_{a}(X,L_{t_0})=h_a^0(X,P)$.
Moreover, we may assume  that the map $\varphi\colon X\rightarrow Z$ induced by $|P|$ is the eventual map (see \S~\ref{ssec: eventual}) and has degree  $m_{L_{t_0}}=2$. Finally, up to replacing both $X$ and $Z$ by modifications, we can assume that $Z$ is smooth, so that there is a morphism $a'\colon Z\to A$ with $a=a'\circ \varphi$. Let $N\in \pic(Z)$ be such that $P=\varphi^*(N)$; we  have  $h^0_{a}(X,P)=h^0_{a'}(Z,N)$.
 Hence we have that
\begin{gather*}
 2n!h^0_{a'}(Z,N)=2n!h^0_{a}(X,L_{t_0})=\vol_X(L_{t_0})\geq \\
  \geq \vol_X(P)= 2\vol_Z(N)\geq 2n!h^0_{a'}(Z,N),
\end{gather*}
where the last inequality follows by \cite[Theorem 6.7]{BPS1} (cf. \eqref{eq: severi1}).
 So $\vol_Z(N)=n!h^0_{a'}(Z,N)$ and we can apply Theorem \ref{thm: Severi line 1} to conclude that $q=n$ and that $a'\colon Z\to A$ is birational. Hence $\deg a=2$.

\smallskip

 \noindent (ii) Let $X\overset{\eta}{\to} \overline X\overset{\bar a}{\to} A$ be the Stein factorization of $a$; since  $\overline X$ is normal and $A$ is smooth, $\bar a$ is a flat double cover and, in particular, $\overline X$ is Gorenstein and $K_{\overline X}={\bar a}^*N$ for some line bundle $N$ on $A$. It follows that $K_X= a^*N+E$, where $E$ is $\eta$-exceptional.
 Note that $N$ is ample, since otherwise $K_X$ would not be big.

  The usual projection formulae for double covers give $h^0_{\overline a}(\overline X, K_{\overline X})=h^0_{\rm Id}(A,N)$, hence for general $\alpha\in \Pic^0(A)$ the paracanonical system $|K_{\overline X}\otimes \alpha|$ is a pull back from $A$. Since the moving part of $|K_X\otimes \alpha|$ is a subsystem of $\eta^*|K_{\overline X}\otimes \alpha|$, the map given by $|K_X\otimes \alpha|$ is composed with $a$ (recall  that $h^0_a(X,K_X)=\chi(K_X)$ by generic vanishing, so the paracanonical  systems $|K_X\otimes \alpha|$ are non empty by assumption).

  One can argue exactly in the same way for the degree 2 map $a_d\colon X\ud\to A$ and show that for $\alpha$ general  the map given by $|K_{X\ud}\otimes \alpha|$ is composed with $a_d$. It follows that $m_{K_X}=2$ and the eventual map of $K_X$ is birationally equivalent to $a$.

\smallskip

\noindent (iii) Under this hypothesis, since $h^0_a(X,L)>0$, we have  $\chi(K_X)=h^0_a(X,K_X)>0$. As we have shown in  (i), there is $0>t_0\in \Q$ such that $h^0_a(X, L_{t_0})>0$ and $m_{L_{t_0}}=2$. Up to passing to an \'etale cover induced by a multiplication map,  we have an inclusion $L_{t_0}\to  L$  and   by assumption we also have an inclusion $L \to  K_X\otimes \alpha$, for some $\alpha \in\rm{Pic}^0(A)$. Since by (ii) $m_{L_{t_0}}=m_{K_X}=2$,  we can conclude that $m_L=2$ and that $a$ is also birationally equivalent to the eventual map of $L$.

 \end{proof}

\begin{proof}[Proof of Corollary \ref{cor: Severi Canonico}]
Given  a desingularization $X'\rightarrow X$,   we have $\vol_{X'}(K_{X'})=\vol_X(K_X)=K_X^n$. Since the singularities of $X$ are rational and $\omega_X$ is the dualizing sheaf of $X$, we also have that $h^0_{a'}(X',K_{X'})=\chi(\omega_{X'})=\chi(\omega_X)$ and the result follows directly from Theorem \ref{thm: Severi line 2}.
\end{proof}

Finally we consider the case of curves on the second Severi line. In contrast to the case $n\ge 2$, here  the map $a\colon X\to A$ is not always a double cover:
\begin{thm}\label{thm: curve}
Let $C$ be a smooth curve of genus $g\geq 4$, let $a\colon  C \rightarrow A$ be a strongly generating map to an abelian variety and  let $L\in \pic(C)$ be a line bundle with $h^0_a(C,L) \geq 2$.
If $\deg L\le 2g-2$ and ${\deg}L=2h^0_a(C,L)$,  then one of the following cases occurs:
\begin{enumerate}
\item $A$ is an elliptic curve, the map $a$ has degree 2,  $L=a^*N$ for some line bundle $N$ on $A$ and $|L\otimes \alpha|$ is not  birational  for general $\alpha\in \Pic^0(A)$;
\item $\deg L=2g-2$  and $|L\otimes\alpha|$ is  birational for general $\alpha\in \Pic^0(A)$.
\end{enumerate}
\end{thm}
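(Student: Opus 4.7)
The plan is to follow the template of Theorem \ref{thm: Severi line 2}: bound the eventual degree $m_L$ via the first Clifford-Severi inequality applied to the target of the eventual map, then analyze the two possibilities. The new ingredient for curves will be a Castelnuovo bound applied to the \'etale covers $C^{(d)}$. Set $r:=h^0_a(C,L)$, so the hypothesis reads $\deg L=2r$, with $r\ge 2$. Let $\varphi\colon C\to Z$ be the eventual map of $L$ and write $a=g\circ\varphi$; since $a^*=\varphi^*\circ g^*$ is injective, so is $g^*$, and $a':=g$ is strongly generating. Applying the continuous resolution (\S\ref{ssec: continuous-res}) on $C^{(d)}$ for $d\gg 0$ and $\alpha\in\pic^0(A)$ general, $|L^{(d)}\otimes\alpha|$ decomposes as $|\varphi^{(d)*}N^{(d)}|+F$ with $F$ effective, yielding $h^0_a(C,L)=h^0_{a'}(Z,N)$ and $\deg L\ge m_L\deg N$. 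The first Clifford-Severi inequality \eqref{eq: severi1} applied to $(Z,N,a')$ in dimension $1$ gives $\deg N\ge h^0_{a'}(Z,N)=r$, so $2r=\deg L\ge m_L\deg N\ge m_L r$, forcing $m_L\in\{1,2\}$.

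Next I would treat the case $m_L=2$. Then every inequality above is an equality, so $(Z,N,a')$ lies on the first Clifford-Severi line; Theorem \ref{thm: Severi line 1} forces $\dim A=\dim Z=1$, so $A$ is elliptic, and $a'\colon Z\to A$ is a birational morphism, hence an isomorphism of smooth curves. The equality $\deg L=m_L\deg N$ also gives $\deg F=0$ and thus $F=0$, so $L^{(d)}\otimes\alpha=\varphi^{(d)*}N^{(d)}_\alpha$, and by descent (absorbing a $\pic^0(A)$-twist into $N$) we obtain $L=a^*N$ for some $N\in\pic(A)$. Since $|L\otimes\alpha|=a^*|N\otimes\alpha_0|$ factors through the degree-$2$ map $a$, it is not birational, giving case (1).

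For the case $m_L=1$, the eventual map is birational, so for $d\gg 0$ and general $\alpha$ the moving part of $|L^{(d)}\otimes\alpha|$ defines a birational morphism of $C^{(d)}$ onto a nondegenerate curve in $\mathbb{P}^{d^{2q}r-1}$ of degree at most $\deg L^{(d)}=2d^{2q}r$. Castelnuovo's bound for birational nondegenerate curves then gives $g(C^{(d)})\le \pi(2d^{2q}r,\,d^{2q}r-1)$. Writing $D=d^{2q}r$, the standard Castelnuovo asymptotics yield $\pi(2D,D-1)=D+4$ for $D$ sufficiently large, while $g(C^{(d)})=d^{2q}(g-1)+1$; rearranging gives $d^{2q}(g-1-r)\le 3$. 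Since $q\ge 1$, letting $d\to\infty$ forces $r\ge g-1$, which combined with $r\le g-1$ from $\deg L\le 2g-2$ forces $r=g-1$, i.e.\ $\deg L=2g-2$. The birationality of $|L\otimes\alpha|$ on $C$ itself then follows from $m_L=1$: any factorization of $|L\otimes\alpha|$ through a nontrivial cover of $C$ would also produce a nontrivial factor of the eventual map, contradicting $m_L=1$.

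The main obstacle I anticipate is in this last step: verifying the precise Castelnuovo asymptotics $\pi(2D,D-1)=D+4$ (a standard but finicky calculation requiring separate checks for small $D$), and confirming that the birationality of the eventual map descends to birationality of $|L\otimes\alpha|$ on $C$ itself, which should follow from the universal property of the eventual map but may require a brief explicit argument via Stein factorization.
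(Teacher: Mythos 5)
Your route is genuinely different from the paper's (which works inside the Brill--Noether loci $W^r_d(C)$ using Debarre--Fahlaoui, \cite{cmp} and the Clifford$+$ theorem), and most of it is sound: the reduction $m_L\in\{1,2\}$ via the target of the eventual map, the analysis of the case $m_L=2$ (modulo the descent of $L^{(d)}\otimes\alpha=\varphi^{(d)*}N^{(d)}$ to $L=a^*N$ on $C$, which needs a short but genuine argument, e.g.\ via the covering involution and the kernel of $\wt\mu_d^*$ on $\pic(C)$), and the Castelnuovo computation forcing $\deg L=2g-2$ when $m_L=1$ are all correct ideas.

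The genuine gap is the very last step: the inference ``$m_L=1$ $\Rightarrow$ $|L\otimes\alpha|$ is birational for general $\alpha$'' is backwards. The eventual map is the \emph{limit} of the maps given by $|L^{(d)}\otimes\alpha|$: since $\wt\mu_d^*H^0(L\otimes\alpha)\subseteq H^0(L^{(d)}\otimes\alpha')$, the map given by $|L\otimes\alpha|$ \emph{factors through} the eventual map $\varphi$, so its degree is bounded \emph{below}, not above, by $m_L$. Thus $m_L=1$ is perfectly compatible, a priori, with $\phi_{|L\otimes\alpha|}$ being $2$-to-$1$, and your proposed fix ``a nontrivial factor of $|L\otimes\alpha|$ would give a nontrivial factor of the eventual map'' does not hold. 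Closing this gap is where the real work lies, and it is exactly what the paper's Clifford$+$ argument does: if $\deg L=2g-2$ and $|L\otimes\alpha|$ is not birational, then by \cite[III.3.~ex.B.7]{ACGH} it factors through a double cover of a curve $D$ of genus $0$ or $1$; the genus-$0$ case is excluded because it forces $C$ to be both hyperelliptic and bielliptic, hence $g\le 3$ (this is the only place the hypothesis $g\ge 4$ enters --- the fact that your argument never uses $g\ge 4$ is itself a sign that something is missing); and the genus-$1$ case gives $L=a^*N$ with $A=D$ elliptic, which lands you in case (i) (and in fact has $m_L=2$, but proving that requires the projection-formula computation for the double cover, not the universal property of the eventual map). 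So your scheme can be completed, but only by importing essentially the whole of the paper's case analysis at this point.
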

\begin{proof}
Up to replacing $L$ by $L\otimes \alpha$ for some general $\alpha \in \Pic^0(A)$, we may assume that $h^0(C,L)=h^0_a(C,L)=:r+1$. Plugging the relation $\deg L=2h^0(C,L)$ into the Riemann-Roch formula we get $h^0(C,L)+h^0(C,K_C-L)=g-1$, hence in particular $r\le g-2$.

Assume  $r<g-2$. In this case we  can apply an inequality of Debarre and Fahlaoui \cite[Proposition 3.3]{DF} on the dimension of the abelian varieties contained in the Brill-Noether locus $W^r_d(C)$, that in our case  gives  $\dim A\le (d-2r)/2=1$. The Brill-Noether locus $W^r_{2r+2}(C)$ is a proper subset of $J^{2r+2}(C)$ hence by \cite[Cor.~3.9]{cmp} either we are in case (i),  or  $a\colon C\to A$ has degree 2, $C$ is hyperelliptic and $L =r\Delta+a^*P$, where $\Delta$ is the hyperelliptic $g^1_2$ and $P\in A$ is a point.  To exclude the latter case, we claim that a hyperelliptic and bielliptic curve has always genus $g\leq 3$.

Indeed, since the hyperelliptic involution commutes with all the automorphisms, the composition of the bielliptic and the hyperelliptic involutions gives a third involution on $C$. The  first two involutions act on  $H^0(C,\omega_C)$ with  invariant subspaces of eigenvalue -1 of dimensions $g$ and $g-1$ respectively. So the  composition induces a double cover from $C$ to a curve of genus $g-1$. By the Hurwitz formula we obtain the bound $g\leq 3$.

So we have $L=a^*N$, where $N$ is a line bundle of degree $r+1$ on $A$ and $L\otimes\alpha=a^*(N\otimes \alpha)$  for every $\alpha \in \Pic^0(A)$; since $h^0(A, N\otimes\alpha )=r+1=h^0_a(C,L)$, it follows that $|L\otimes\alpha|$ is not birational for general $\alpha$.

Assume now $r=g-2$, namely   $\deg L= 2g-2$.  If $|L\otimes\alpha|$ is   birational for general $\alpha$ in $\Pic^0(A)$, then we have  case (ii). So assume that $|L\otimes\alpha|$ is  not birational for general $\alpha$ in $\Pic^0(A)$.
Since  $g>3$,    by the  Clifford+ Theorem \cite[III.3.~ex.B.7]{ACGH} the induced map $\phi_{|L\otimes\alpha|}$  factorizes as a double cover  $\sigma$ of a curve $D_{\alpha}$ of genus $0$ or $1$, composed with a birational map $\phi_{M_{\alpha}}$ such that the moving part of $|L\otimes\alpha|$ is $\sigma^*|M_{\alpha}|$.
Since the group $\Aut(C)$ is finite, the double cover $\sigma\colon C\to D_{\alpha}$ is independent of $\alpha\in \Pic^0(A)$ general and we may write $D=D_{\alpha}$.

If $g(D)=0$, then $C$ is hyperelliptic, $\sigma$ is the hyperelliptic double cover and $L=r\Delta+F_{\alpha}$, where $\Delta$ is the $g^1_2$ and $F_{\alpha}$ is an effective divisor of degree 2. Sending  $\alpha$ to  $F_{\alpha}$ defines  a  generically injective rational map $ f\colon A\to W_2(C)$. By \cite[Cor.~3.9]{cmp} we have, as before,   $\dim A=1$,  and there is a degree 2 map $\beta\colon C\to A$ such that  $f=\beta^*$. So $C$ is both bielliptic and hyperelliptic, hence by the argument previously used it has genus at most 3, against our assumptions.

If $g(D)=1$, then  $L\otimes\alpha$ is a pull-back from $D$. Since $a$ is strongly generating  we have that $A=D$  and hence in particular $\deg a=2$ and $\dim A=1$.
\end{proof}

     \end{document}